\def \N {{\mathbb N}}
\def \d {\,{\rm d}}
\def\Li{\hbox{{\rm Li}}}
\def\le{\leqslant}
\def\ge{\geqslant}
\theoremstyle{plain}
\newtheorem{theorem}{Theorem}
\newtheorem{proposition}{Proposition}[section]
\newtheorem{lemma}[proposition]{Lemma}
\theoremstyle{remark}
\numberwithin{equation}{section}
\begin{document}

\vglue -5mm

\title[\tiny Distribution of elements of a floor function set in arithmetical progression]
{Distribution of elements of a floor function set in arithmetical progression}
\author{\tiny Yahui Yu and Jie Wu}

\address{%
Yahui Yu
\\
Department of Mathematics and Physics
\\
Luoyang Institute of Science and Technology
\\
Luoyang, Henan 471023
\\
P. R. China
}
\email{yuyahui@lit.edu.cn}

\address{%
Jie Wu
\\
CNRS UMR 8050
\\
Laboratoire d'Analyse et de Math\'ematiques Appliqu\'ees
\\
Universit\'e Paris-Est Cr\'eteil
\\
94010 Cr\'eteil cedex
\\
France}
\email{jie.wu@u-pec.fr}

\date{\today}

\begin{abstract}
Let $[t]$ be the integral part of the real number $t$.
The aim of this short note is to study the distribution of elements of the set 
$\mathcal{S}(x) := \{[\frac{x}{n}] : 1\le n\le x\}$ 
in the arithmetical progression $\{a+dq\}_{d\ge 0}$.
Our result is as follows: the asymptotic formula
\begin{equation}\label{YW:result}
S(x; q, a)
:= \sum_{\substack{m\in \mathcal{S}(x)\\ m\equiv a ({\rm mod}\,q)}} 1 
= \frac{2\sqrt{x}}{q} + O((x/q)^{1/3}\log x)
\end{equation}
holds uniformly for $x\ge 3$, $1\le q\le x^{1/4}/(\log x)^{3/2}$ and $1\le a\le q$,
where the implied constant is absolute.
The special case of \eqref{YW:result} with fixed $q$ and $a=q$ confirms a recent numeric test of Heyman.

\end{abstract}

\subjclass[2010]{11L03, 11N69}
\keywords{Trigonometric and exponential sums,  Distribution of integers in special residue classes}

\maketitle   	

\section{Introduction}

As usual, denote by $[t]$ the integral part of the real number $t$.
Recently Heyman \cite{Heyman2019} quantifies the cardinal of the set
\begin{equation}\label{def:S(x)}
\mathcal{S}(x) 
:= \Big\{\Big[\frac{x}{n}\Big] : 1\le n\le x\Big\}.
\end{equation}
Noticing that
$$
\mathcal{S}(x) 
= \Big\{m \in \N : m=\Big[\frac{x}{n}\Big] \; \text{for some} \; n\le x\Big\},
$$
by elementary argument he proved that (see \cite[Theorems 1 and 2]{Heyman2019}):
\begin{equation}\label{Heyman:result-1}
S(x) := |\mathcal{S}(x)|
= [\sqrt{4x+1}]-1
= 2\sqrt{x} + O(1) 
\end{equation}
for $x\to\infty$.
Subsequently in his another article \cite[Theorem 1]{Heyman2021}, 
he also investigated the number of primes in the set $\mathcal{S}(x)$:
\begin{equation}\label{Heyman:result-2}
\begin{aligned}
\pi_{\mathcal{S}}(x) 
& := \Big|\Big\{\Big[\frac{x}{n}\Big] : 1\le n\le x \;\,\text{and}\; \Big[\frac{x}{n}\Big] \; \text{is prime}\Big\}\Big|
\\\noalign{\vskip 0,5mm}
& \; = \frac{2\sqrt{x}}{\log\sqrt{x}} + O\bigg(\frac{\sqrt{x}}{(\log x)^2}\bigg),
\qquad\text{as}\quad
 x\to\infty.
\end{aligned}
\end{equation}
This can be considered as analogue of the prime number theorem for the set $\mathcal{S}(x)$.
Very recently Ma and Wu \cite{MaWu2021} sharpened this result by proving the strong form of the prime number theorem for $\mathcal{S}(x)$: 
\begin{equation}\label{MaWu:result-1}
\pi_{\mathcal{S}}(x) 
= \Li_{\mathcal{S}}(x) + O\big(\sqrt{x}\,{\rm e}^{-c(\log x)^{3/5}(\log\log x)^{-1/5}}\big),
\end{equation}
where $c>0$ is a positive constant and
$$
\Li_{\mathcal{S}}(x) := \int_2^{\sqrt{x}} \frac{\d t}{\log t} + \int_2^{\sqrt{x}} \frac{\d t}{\log(x/t)}\cdot
$$
Some related results have been obtained in \cite{LiuWuYang2021a, MaWu2020}.

In \cite{Heyman2019}, Heyman also proposed to consider a more general problem than \eqref{Heyman:result-1}, 
i.e. to study the asymptotic behaviour of the cardinal $S(x; q)$ of the set
$$
\mathcal{S}(x; q)
:= \Big\{\Big[\frac{x}{n}\Big] : 1\le n\le x \;\, \text{and}\;\, q\mid \Big[\frac{x}{n}\Big]\Big\}
$$
for each fixed integer $q\ge 1$.
Let $\psi(t) := t-[t]-\frac{1}{2}$ and $b:=(\sqrt{4x+1}-1)/2$.
Heyman first showed that (see \cite[Lemma 3]{Heyman2019})
$$
S(x; q) 
= \frac{4\sqrt{x}}{3q}
+ \sum_{r=1}^{[x/b]} \sum_{d=(x-r)/(qr)}^{x/(qr)} \Big(\psi\Big(\frac{x}{dq+1}\Big)-\psi\Big(\frac{x}{dq}\Big)\Big)
+ O(1)
$$
and then wrote:
``Calculating
various sums using Maple suggests that the double sum cannot successfully be
bound. In fact Maple suggests that the double sum is asymptotically equivalent to
$2\sqrt{x}/(3q)$. If this argument is correct then
\begin{equation}\label{Conjecture:Heyamn}
S(x; q)\sim \frac{2\sqrt{x}}{q}
\qquad
(x\to\infty),
\end{equation}
as one would expect heuristically."

The aim of this short note is to prove \eqref{Conjecture:Heyamn}.
In fact we can consider a more general problem: for $1\le a\le q$,
study the distribution of elements of the set $\mathcal{S}(x)$ 
in the arithmetical progression $\{a+dq\}_{d\ge 0}$.
Define
$$
\mathcal{S}(x; q, a)
:= \Big\{\Big[\frac{x}{n}\Big] : 1\le n\le x \;\, \text{and}\;\, \Big[\frac{x}{n}\Big]\equiv a\,({\rm mod}\,q)\Big\}
$$
and
$$
S(x; q, a) := |\mathcal{S}(x; q, a)|.
$$

Our result is as follows.

\begin{theorem}\label{thm1}
Under the previous notation, we have
\begin{equation}\label{thm1:eq-1}
S(x; q, a) = \frac{2\sqrt{x}}{q} + O((x/q)^{1/3}\log x)
\end{equation}
uniformly for $x\ge 3$, $1\le q\le x^{1/4}/(\log x)^{3/2}$ and $1\le a\le q$,
where the implied constant is absolute.
\end{theorem}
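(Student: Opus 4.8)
The plan is to split $\mathcal{S}(x)$ according to the size of its elements and to reduce the congruence condition to a sawtooth sum that can be attacked by van der Corput's method.

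First I would record the elementary characterization $m\in\mathcal{S}(x)\iff [x/m]>[x/(m+1)]$, together with the remark that the interval $(x/(m+1),x/m]$ has length $x/(m(m+1))\ge 1$ exactly for $m\le b$ (indeed $b(b+1)=x$ and $x/b=b+1$). Hence every integer $1\le m\le \lfloor b\rfloor$ lies in $\mathcal{S}(x)$, while the elements $m>\lfloor b\rfloor$ are precisely the (distinct) values $[x/n]$ with $1\le n\le \lfloor b\rfloor$. This gives a clean partition $S(x;q,a)=A+B$, where $A:=\#\{1\le m\le \lfloor b\rfloor:\, m\equiv a\,({\rm mod}\,q)\}=\sqrt{x}/q+O(1)$ counts the ``small'' elements, and $B:=\#\{1\le n\le \lfloor b\rfloor:\, [x/n]\equiv a\,({\rm mod}\,q)\}$ counts the ``large'' ones through their unique preimage $n$.

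Next I would detect the congruence by a fractional part: one checks $[x/n]\equiv a\,({\rm mod}\,q)\iff \{x/(nq)\}\in[a/q,(a+1)/q)$, and the indicator of an interval modulo one expands through $\psi$ as $\mathbf 1_{\{t\}\in[\alpha,\beta)}=(\beta-\alpha)+\psi(t-\beta)-\psi(t-\alpha)$. Taking $t=x/(nq)$, $\alpha=a/q$, $\beta=(a+1)/q$ and summing over $n\le N:=\lfloor b\rfloor$ yields
\[
B=\frac{N}{q}+R,\qquad R:=\sum_{n\le N}\Big(\psi\Big(\tfrac{x}{nq}-\tfrac{a+1}{q}\Big)-\psi\Big(\tfrac{x}{nq}-\tfrac{a}{q}\Big)\Big).
\]
Since $N/q=\sqrt{x}/q+O(1)$ supplies the second half of the main term, and $A$ the first, one gets $A+N/q=2\sqrt{x}/q+O(1)$, so the whole theorem rests on proving $R\ll (x/q)^{1/3}\log x$.

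To bound $R$ I would write $y:=x/q$, split $n\le N$ into $O(\log x)$ dyadic blocks $n\sim N_1$, and on each block insert the truncated Fourier expansion of $\psi$ with $H$ frequencies, reducing matters to $\sum_{n\sim N_1}\me^{2\pi\mi\, h y/n}$. For the high frequencies the second–derivative test of van der Corput gives $\ll (hy)^{1/2}N_1^{-1/2}$; choosing $H\asymp N_1 y^{-1/3}$ and summing the weights $1/h$ contributes $\ll y^{1/3}$, and the truncation error is $\ll N_1/H\ll y^{1/3}$ as well. The delicate point, and the main obstacle, is that $N$ can be as large as $\sqrt{x}\gg\sqrt{y}$, so for the low frequencies the phase $hy/n$ has derivative $<1$ and the second–derivative test is useless; there I would instead invoke the first–derivative (Kuzmin–Landau) estimate, which contributes $\ll N_1^2/y$. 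Because $N_1\le \sqrt{x}$ forces $N_1^2/y\le q$, and the range $q\le x^{1/4}$ gives $q\le (x/q)^{1/3}$, each block is $\ll (x/q)^{1/3}$, whence $R\ll (x/q)^{1/3}\log x$. Collecting $A$, the main term of $B$, and $R$ proves the asymptotic; note finally that the hypothesis $q\le x^{1/4}/(\log x)^{3/2}$ is exactly the condition making this error smaller than the main term $2\sqrt{x}/q$.
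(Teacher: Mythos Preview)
Your argument is correct and reaches the same error term $(x/q)^{1/3}\log x$, but the decomposition is genuinely different from the paper's. The paper parametrises by the element $m=dq+a$ itself: it writes
\[
S(x;q,a)=\sum_{d\le (x-a)/q}\mathbb{1}\!\Big(\Big[\tfrac{x}{dq+a}\Big]-\Big[\tfrac{x}{dq+a+1}\Big]>0\Big)+O(1),
\]
splits at $d=(\sqrt{x}-a)/q$, then truncates the large-$d$ part at $(x/q)^{2/3}$ and is left with the sawtooth sums $\sum_{(\sqrt{x}-a)/q<d\le (x/q)^{2/3}}\psi\!\big(\tfrac{x}{dq+a+\delta}\big)$, which it bounds on dyadic blocks via Vaaler's lemma and the exponent pair $(\tfrac12,\tfrac12)$ (this is the content of Lemma~\ref{lem2}). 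You instead parametrise the large elements by their \emph{preimage} $n\le \lfloor b\rfloor$, detect the congruence through the fractional-part condition $\{x/(nq)\}\in[r/q,(r+1)/q)$, and are led to sawtooth sums $\sum_{n\le\sqrt{x}}\psi\!\big(\tfrac{x}{nq}-\tfrac{c}{q}\big)$. Your route exploits the natural involution $n\leftrightarrow[x/n]$ on $\mathcal{S}(x)$ and avoids the artificial truncation at $(x/q)^{2/3}$; the paper's route stays closer to Heyman's original setup. Analytically the two are equivalent: your combination of the second-derivative test for $hy/N_1^2\ge 1$ with Kuzmin--Landau for $hy/N_1^2<1$ amounts exactly to the exponent-pair bound $L^{1/2}N^{1/2}+L^{-1}$ the paper uses, and the crucial inequality $N_1^2/y\le q\le (x/q)^{1/3}$ for $q\le x^{1/4}$ is the same in both arguments. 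Two small points to tidy up: for $a=q$ the interval should be $[0,1/q)$ rather than $[1,(q+1)/q)$ (a harmless relabelling), and you should record that the second term $\lambda^{-1/2}\asymp (N_1^3/(hy))^{1/2}$ of the van der Corput bound is also $\ll(x/q)^{1/3}$ on the high-frequency range (it is, but you only mention the first term).
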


Since $S(x; q, q) = S(x; q)$, Heyman's expected result \eqref{Conjecture:Heyamn} is a special case of 
our \eqref{thm1:eq-1} with fixed $q$.
It is worth to notice that le result of Theorem \ref{thm1} implies  
$$
S(x; q, a)\sim \frac{2\sqrt{x}}{q}
$$ 
uniformly for $1\le q = o(x^{1/4}/(\log x)^{3/2})$ and $1\le a\le q$.
We didn't make an effort to get the best possible exponent,
and further improvements of the constant $\frac{1}{4}$ are possible.

It seems interesting to establish analogues of the Dirichlet theorem or more general the Siegel-Walfisz theorem,
the Brun-Titchmarsh theorem, the Bombieri-Vinogradov theorem for the set $\mathcal{S}(x)$. 
We shall  leave these problems to another occasion.

\vskip 8mm

\section{Preliminary lemmas}

In this section, we shall cite two lemmas, which will be needed in the next section.
The first one is due to Vaaler (see \cite[Theorem A.6]{GrahamKolesnik1991}).

\begin{lemma}\label{lem:Vaaler}
Let $\psi(t) := t - [t]-\frac{1}{2}$.
For $x\ge 1$ and $H\ge 1$, we have
$$
\psi(x) = - \sum_{1\le |h|\le H} \Phi\Big(\frac{h}{H+1}\Big) \frac{{\rm e}(hx)}{2\pi{\rm i} h} + R_H(x),
$$
where ${\rm e}(t):={\rm e}^{2\pi{\rm i}t}$, $\Phi(t):=\pi t (1-|t|)\cot(\pi t) + |t|$ and the error term $R_H(x)$ satisfies
\begin{equation}\label{lem:Vaaler:eq}
|R_H(x)|\le \frac{1}{2H+2} \sum_{0\le |h|\le H} \Big(1-\frac{|h|}{H+1}\Big) {\rm e}(hx).
\end{equation}
\end{lemma}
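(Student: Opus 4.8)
The plan is to reproduce Vaaler's construction, whose engine is the Beurling--Selberg theory of extremal entire functions. The basic object is Beurling's entire majorant $B(z)$ of the signum function: it is of exponential type $2\pi$, satisfies $B(x)\ge\operatorname{sgn}(x)$ for every real $x$, agrees with $\operatorname{sgn}$ at all nonzero integers, and realizes the minimal excess $\int_{\R}\big(B(x)-\operatorname{sgn}(x)\big)\d x=1$. First I would assemble these properties together with the Paley--Wiener fact that $B$ is band-limited, with Fourier transform supported in $[-1,1]$, since band-limitedness is exactly what makes the later periodization collapse to a finite trigonometric sum.

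Next I would pass from $\operatorname{sgn}$ to the sawtooth. Writing $\psi'=1-\sum_{n\in\Z}\delta(\cdot-n)$ distributionally exhibits $\psi$, up to an additive constant, as a primitive of $1$ minus the Dirac comb, that is, as a periodization of data on the line. Accordingly I would form the appropriate combination of $B(\pm z)$, rescale it to exponential type $2\pi(H+1)$ so that its Fourier transform $\widehat V$ is supported in $[-(H+1),H+1]$, and periodize by Poisson summation. Periodization then collapses to $\psi^*(x)=\sum_{h}\widehat V(h)\,{\rm e}(hx)$, and a residue evaluation of $\widehat V$ yields the coefficients $-\Phi\big(\tfrac{h}{H+1}\big)\big/(2\pi{\rm i}h)$; since $\Phi(\pm1)=0$ the terms with $|h|=H+1$ drop out, leaving a polynomial of degree $H$ and the stated main term. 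This is the step that forces the explicit shape $\Phi(t)=\pi t(1-|t|)\cot(\pi t)+|t|$, with $\Phi(0)=1$.

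For the remainder I would set $R_H(x):=\psi(x)-\psi^*(x)$ and control it by periodizing the extremal \emph{gap} rather than the approximant. The point of the Beurling--Selberg optimality is that this gap, namely $B-\operatorname{sgn}$ rescaled and symmetrized, is nonnegative and band-limited to $[-(H+1),H+1]$; hence its periodization is a nonnegative trigonometric polynomial of degree $H$ with constant term $\tfrac{1}{2H+2}$, which one checks is exactly the normalized Fej\'er kernel $\frac{1}{2H+2}\sum_{0\le|h|\le H}\big(1-\tfrac{|h|}{H+1}\big){\rm e}(hx)$. The majorant and minorant inequalities then sandwich $R_H(x)$ between $\pm$ this kernel, giving \eqref{lem:Vaaler:eq}.

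The main obstacle is the harmonic-analytic core: proving that Beurling's explicit function really is a majorant with $B-\operatorname{sgn}\ge0$, that it interpolates $\operatorname{sgn}$ (with the correct derivative data) at the integers, and that its Fourier transform has exactly the claimed support and values. These rest on the Poisson/quadrature formula for entire functions of exponential type and on a delicate residue evaluation; the subtle point is verifying that the periodized extremal gap is \emph{equal} to the Fej\'er kernel, not merely comparable to it, since it is this equality that pins down the clean constant $\frac{1}{2H+2}$ in \eqref{lem:Vaaler:eq}.
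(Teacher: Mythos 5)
The paper does not actually prove this lemma---it is quoted verbatim from Vaaler's theorem as presented in Graham--Kolesnik (Theorem~A.6)---and your outline is exactly the Beurling--Selberg/Vaaler argument given in that cited source: majorize $\operatorname{sgn}$ by Beurling's function $B$, form the entire approximant of exponential type $2\pi(H+1)$, periodize by Poisson summation so the trigonometric polynomial with coefficients $-\Phi\big(\tfrac{h}{H+1}\big)/(2\pi\mi h)$ emerges (with $\Phi(0)=1$ and $\Phi(\pm1)=0$ killing the $|h|=H+1$ terms), and bound the error by the periodized symmetrized gap $B(z)+B(-z)=2\big(\tfrac{\sin\pi z}{\pi z}\big)^2$, whose periodization is precisely the Fej\'er kernel with constant term $\tfrac{1}{2H+2}$. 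Your sketch is a correct reproduction of that standard proof, with the one caution that $B-\operatorname{sgn}$ alone is not band-limited (it is not entire, and $\widehat{\operatorname{sgn}}$ has full support); only the symmetrized combination $B(z)+B(-z)$ is of exponential type, which is why your phrase ``rescaled and symmetrized'' is doing essential work there.
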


\begin{lemma}\label{lem2}
For $1\le a\le q$ and $\delta\in \{0, 1\}$, define
\begin{equation}\label{lem2:eq1}
\mathfrak{S}_{\delta}(D, D')
:= \sum_{D<d\le D'} \psi\Big(\frac{x}{dq+a+\delta}\Big).
\end{equation}
If $(\kappa, \lambda)$ is an exponent pair, then we have
\begin{equation}\label{lem2:eq2}
\mathfrak{S}_{\delta}(D, D')
\ll (x^{\kappa} D^{-\kappa+\lambda} q^{-\kappa})^{1/(1+\kappa)} + x^{\kappa} D^{-2\kappa+\lambda} q^{-\kappa} + x^{-1} D^2 q
\end{equation}
uniformly for $1\le a\le q$, $(\sqrt{x}-a)/q<D\le (x/q)^{2/3}$ and $D<D'\le 2D$,
where the implied constant depends on $(\kappa, \lambda)$ at most.
\end{lemma}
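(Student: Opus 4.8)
The plan is to combine Vaaler's approximation (Lemma~\ref{lem:Vaaler}) with a free parameter $H\ge 1$ and the exponent pair $(\kappa,\lambda)$. Setting $f_h(d):=hx/(dq+a+\delta)$ and applying Lemma~\ref{lem:Vaaler} to each term of \eqref{lem2:eq1}, one gets
$$
\psi\Big(\frac{x}{dq+a+\delta}\Big)=-\sum_{1\le|h|\le H}\frac{\Phi(h/(H+1))}{2\pi\mi h}\,\me(f_h(d))+R_H\Big(\frac{x}{dq+a+\delta}\Big).
$$
Summing over $D<d\le D'$, inverting the order of summation, using $|\Phi(t)|\ll 1$, and majorising the $R_H$-contribution by \eqref{lem:Vaaler:eq}, I would reduce the problem to the one-dimensional exponential sums $T_h:=\sum_{D<d\le D'}\me(f_h(d))$ via
$$
\mathfrak{S}_{\delta}(D,D')\ll\sum_{1\le h\le H}\frac{|T_h|}{h}+\frac{1}{H}\sum_{1\le h\le H}|T_h|+\frac{D}{H},
$$
where the last term is the $h=0$ part of the majorant \eqref{lem:Vaaler:eq}.

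The heart of the argument is the estimation of $T_h$. A direct computation gives $f_h^{(k)}(d)=(-1)^k k!\,hx\,q^k(dq+a+\delta)^{-(k+1)}$, so that, writing $F:=hx/(Dq)$, one has $|f_h^{(k)}(d)|\asymp F\,D^{-k}$ for every $k\ge 1$ and every $d\in(D,D']$. Verifying this comparison uniformly is where the hypotheses enter: the bounds $D<d\le D'\le 2D$ together with the lower bound $D>(\sqrt{x}-a)/q$ guarantee $dq+a+\delta\asymp Dq$ (so the implied constants are independent of $h$, $a$, $\delta$), while $D\le(x/q)^{2/3}$ forces $F\ge x/(Dq)\ge(x/q)^{1/3}\ge 1$ for all $h\ge 1$, so that the exponent pair is genuinely applicable to each $T_h$. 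This yields
$$
|T_h|\ll F^{\kappa}D^{\lambda-\kappa}+F^{-1}D=(hx)^{\kappa}D^{\lambda-2\kappa}q^{-\kappa}+\frac{D^2q}{hx}.
$$
The secondary term $F^{-1}D$ must be retained: it is small for large $h$ but dominant for the smallest frequencies, and it is precisely what will generate the third summand of \eqref{lem2:eq2}.

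Inserting this into the display above and using $\sum_{h\le H}h^{\kappa-1}\ll H^{\kappa}$, $\sum_{h\le H}h^{\kappa}\ll H^{\kappa+1}$ and $\sum_{h\le H}h^{-2}\ll 1$ (the extra factor $1/H$ in the $R_H$-piece absorbing the weight $h$), I would arrive at
$$
\mathfrak{S}_{\delta}(D,D')\ll x^{\kappa}D^{\lambda-2\kappa}q^{-\kappa}H^{\kappa}+\frac{D}{H}+\frac{D^2q}{x}.
$$
It then remains to optimise over $H\ge 1$. Balancing the first two terms suggests $H\asymp\big(D^{1-\lambda+2\kappa}q^{\kappa}x^{-\kappa}\big)^{1/(1+\kappa)}$; when this is $\ge 1$ it produces the first term $(x^{\kappa}D^{\lambda-\kappa}q^{-\kappa})^{1/(1+\kappa)}$ of \eqref{lem2:eq2}, whereas if the optimal value drops below $1$ one simply takes $H=1$, which gives the second term $x^{\kappa}D^{\lambda-2\kappa}q^{-\kappa}$; the residual $D^2q/x$ is the third term. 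The only genuinely delicate points are the uniform verification of the derivative comparison $|f_h^{(k)}(d)|\asymp F D^{-k}$ across the full range of $d$, $h$, $a$, $\delta$, and the discipline of carrying the secondary exponent-pair term rather than discarding it; the remainder is routine summation and optimisation.
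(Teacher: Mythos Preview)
Your proposal is correct and follows essentially the same route as the paper: Vaaler's approximation with a free parameter $H$, the exponent-pair bound $|T_h|\ll (hx/(D^2q))^{\kappa}D^{\lambda}+(hx/(D^2q))^{-1}$ for the inner sum over $d$, summation over $h$, and optimisation in $H$ to produce the three terms of \eqref{lem2:eq2}. Your explicit verification that $dq+a+\delta\asymp Dq$ and that $F\ge 1$ on the stated range, together with your care in retaining the secondary term $F^{-1}D$, are exactly the points the paper treats more tersely.
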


\begin{proof}
Using Lemma \ref{lem:Vaaler}, we can write
\begin{equation}\label{proof:prop_2:1}
\mathfrak{S}_{\delta}(D, D')
= - \frac{1}{2\pi\text{i}} \big(\mathfrak{S}_{\delta}^{\flat}(D, D')
+ \overline{\mathfrak{S}_{\delta}^{\flat}(D, D')}\big)
+ \mathfrak{S}_{\delta}^{\dagger}(D, D'),
\end{equation}
where $H\le D$ and 
\begin{align*}
\mathfrak{S}_{\delta}^{\flat}(D, D')
& := \sum_{h\le H} \frac{1}{h}\Phi\big(\frac{h}{H+1}\big)
\sum_{D<d\le D'} \text{e}\Big(\frac{hx}{dq+a+\delta}\Big),
\\
\mathfrak{S}_{\delta}^{\dagger}(D, D')
& := \sum_{D<d\le D'} R_H\Big(\frac{x}{dq+a+\delta}\Big).
\end{align*}
Inverting the order of summations and applying the exponent pair $(\kappa, \lambda)$ to the sum over $d$,
it follows that
\begin{equation}\label{proof:prop_2:2}
\begin{aligned}
\mathfrak{S}_{\delta}^{\flat}(D, D')
& \ll \sum_{h\le H} \frac{1}{h} 
\Big(\Big(\frac{xh}{D^2q}\Big)^{\kappa} D^{\lambda} + \Big(\frac{xh}{D^2q}\Big)^{-1}\Big)
\\\noalign{\vskip 1mm}
& \ll x^{\kappa} D^{-2\kappa+\lambda} q^{-\kappa} H^{\kappa} + x^{-1} D^2q.
\end{aligned}
\end{equation}
On the other hand, \eqref{lem:Vaaler:eq} of Lemma \ref{lem:Vaaler} allows us to derive that
\begin{align*}
\big|\mathfrak{S}_{\delta}^{\dagger}(D, D')\big|
& \le \sum_{D<d\le D'} \Big|R_H\Big(\frac{x}{dq+a+\delta}\Big)\Big|
\\
& \le \frac{1}{2H+2} \sum_{0\le |h|\le H} \Big(1-\frac{|h|}{H+1}\Big) 
\sum_{D<d\le D'} {\rm e}\Big(\frac{xh}{dq+a+\delta}\Big).
\end{align*}
When $h\not=0$, as before we apply the exponent pair $(\kappa, \lambda)$ to the sum over $d$ and obtain
\begin{equation}\label{proof:prop_2:3}
\mathfrak{S}_{\delta}^{\dagger}(D, D')
\ll DH^{-1} + x^{\kappa} D^{-2\kappa+\lambda} q^{-\kappa} H^{\kappa} + x^{-1} D^2q.
\end{equation}
Inserting \eqref{proof:prop_2:2} and \eqref{proof:prop_2:3} into \eqref{proof:prop_2:1}, it follows that
$$
\mathfrak{S}_{\delta}(D, D')
\ll DH^{-1} + x^{\kappa} D^{-2\kappa+\lambda} q^{-\kappa} H^{\kappa} + x^{-1} D^2q
$$
for $H\le D$. Optimising $H$ on $[1, D]$, we obtain the required inequality \eqref{lem2:eq2}.
\end{proof}

\vskip 8mm

\section{Proof of Theorem \ref{thm1}}

If $\big[\frac{x}{n}\big] = m = dq+a$ with $0\le d\le (x-a)/q$, then $x/(dq+a+1)<n\le x/(dq+a)$. 
Thus we can write
\begin{equation}\label{M:expression}
\begin{aligned}
S(x; q, a)
& = \sum_{d\le (x-a)/q} \mathbb{1}\Big(\Big[\frac{x}{dq+a}\Big]-\Big[\frac{x}{dq+a+1}\Big]>0\Big) + O(1)
\\
& = S_1(x; q, a) + S_2(x; q, a) + O(1),
\end{aligned} 
\end{equation} 
where $\mathbb{1}=1$ if the statement is true and 0 otherwise, and
\begin{align*}
S_1(x; q, a)
& := \sum_{d\le (\sqrt{x}-a)/q} \mathbb{1}\Big(\Big[\frac{x}{dq+a}\Big]-\Big[\frac{x}{dq+a+1}\Big]>0\Big),
\\
S_2(x; q, a)
& := \sum_{(\sqrt{x}-a)/q<d\le (x-a)/q} \mathbb{1}\Big(\Big[\frac{x}{dq+a}\Big]-\Big[\frac{x}{dq+a+1}\Big]>0\Big).
\end{align*}

For $d\le (\sqrt{x}-a-1)/q$, we have
$$
\Big[\frac{x}{dq+a}\Big]-\Big[\frac{x}{dq+a+1}\Big]
>\frac{x}{(dq+a)(dq+a+1)}-1
>0.
$$
Thus we have
\begin{equation}\label{Mdagger:result}
S_1(x; q, a) = \frac{\sqrt{x}}{q} + O(1)
\end{equation}
for $x\ge 3$, where the implied constant is absolute.

Next we treat $S_2(x; q, a)$.
Noticing that for $d>(\sqrt{x}-a)/q$ we have
$$
0<\frac{x}{dq+a}-\frac{x}{dq+a+1}=\frac{x}{(dq+a)(dq+a+1)}<1,
$$
the quantity $\big[\frac{x}{dq+a}\big]-\big[\frac{x}{dq+a+1}\big]$ can only equal to 0 or 1. 
On the other hand, for $d\ge (x/q)^{2/3}$, then $dq+a=[\frac{x}{n}]$ for some $n\le (x/q)^{1/3}$.
Thus we can write
$$
S_2(x; q, a)
= \sum_{(\sqrt{x}-a)/q<d\le (x/q)^{2/3}} \Big(\Big[\frac{x}{dq+a}\Big]-\Big[\frac{x}{dq+a+1}\Big]\Big) + O((x/q)^{1/3}).
$$
Noticing that
$$
\Big[\frac{x}{dq+a}\Big]-\Big[\frac{x}{dq+a+1}\Big]
= \frac{x}{dq+a}-\frac{x}{dq+a+1}-\psi\Big(\frac{x}{dq+a}\Big)+\psi\Big(\frac{x}{dq+a+1}\Big),
$$
it follows that
\begin{equation}\label{Msharp:decomposition}
S_2(x; q, a)
= S_{2, 1}(x; q, a) - S_{2, 2}^{\langle 0\rangle}(x; q, a) + S_{2, 2}^{\langle 1\rangle}(x; q, a) 
+ O((x/q)^{1/3}),
\end{equation}
where 
\begin{align*}
S_{2, 1}(x; q, a)
& := \sum_{(\sqrt{x}-a)/q<p\le (x/q)^{2/3}} \Big(\frac{x}{dq+a}-\frac{x}{dq+a+1}\Big),
\\
S_{2, 2}^{\langle \delta\rangle}(x; q, a)
& := \sum_{(\sqrt{x}-a)/q<d\le (x/q)^{2/3}} \psi\Big(\frac{x}{dq+a+\delta}\Big).
\end{align*}
Firstly an elementary computation shows that
\begin{equation}\label{M1-sharp}
S_{2, 1}(x; q, a)
= \sum_{(\sqrt{x}-a)/q<d\le (x/q)^{2/3}} \frac{x}{d^2q^2} + O(1)
= \frac{\sqrt{x}}{q} + O\big((x/q^4)^{1/3}\big)
\end{equation}
for $x\to\infty$.

It remains to bound $S_{2, 2}^{\langle \delta\rangle}(x; q, a)$.
According to \cite[Theorem 3.10]{GrahamKolesnik1991}),
$(\frac{1}{2}, \frac{1}{2})$ is an exponent pair.
Thus we can take $(\kappa, \lambda) = (\frac{1}{2}, \frac{1}{2})$ in \eqref{lem2:eq2} of Lemma \ref{lem2} to get
$$
\mathfrak{S}_{\delta}(D, D')
\ll (x/q)^{1/3} + (xD^{-1}q^{-1})^{1/2} + x^{-1} D^2 q
$$
uniformly for $1\le a\le q$, $(\sqrt{x}-a)/q<D\le (x/q)^{2/3}$ and $D<D'\le 2D$.
Using $(\sqrt{x}-a)/q<D\le (x/q)^{2/3}$, we easily see that the preceding inequality implies
$$
\mathfrak{S}_{\delta}(D, D')
\ll (x/q)^{1/3}
$$
uniformly for $1\le a\le q$, $(\sqrt{x}-a)/q<D\le (x/q)^{2/3}$ and $D<D'\le 2D$.
From this, we can derive that
\begin{equation}\label{M2-sharp,delta}
\begin{aligned}
S_{2, 2}^{\langle \delta\rangle}(x; q)
& \ll (\log x) \max_{(\sqrt{x}-a)/q<D\le (x/q)^{2/3}} |\mathfrak{S}_{\delta}(D, 2D)|
\\
& \ll (x/q)^{1/3}\log x.
\end{aligned}
\end{equation}
Inserting \eqref{M1-sharp} and \eqref{M2-sharp,delta} into \eqref{Msharp:decomposition}, we obtain
\begin{equation}\label{Msharp:result}
S_2(x; q, a)
= \frac{\sqrt{x}}{q} + O((x/q)^{1/3}\log x)
\end{equation}
for $x\to\infty$.

Now the required result follows from \eqref{M:expression}, \eqref{Mdagger:result} and \eqref{Msharp:result}.
\hfill
$\square$

\vskip 8mm

\noindent{\bf Acknowledgement}. 
We begot to work this project when the second author visited
Luoyang Institute of Science and Technology in summer of 2021, 
despite the difficulty caused by the Coronavirus. 
It is a pleasure to record his gratitude to this institution for their hospitality and support. 
This work is in part supported by the National Natural Science Foundation of China 
(Grant Nos. 11771211, 11971370 and 12071375),
by the Young talent-training plan for college teachers in Henan province (2019GGJS241).

\vskip 8mm

\end{document}